\numberwithin{equation}{section}
\newtheorem{theorem}{Theorem}[section]
\newtheorem{proposition}[theorem]{Proposition}
\newtheorem{lemma}[theorem]{Lemma}
\newtheorem{corollary}[theorem]{Corollary}
\theoremstyle{definition}
\begin{document}

\baselineskip=15pt

\title[Direct image of structure sheaf and parabolic stability]{Direct image of
structure sheaf and parabolic stability}

\author[I. Biswas]{Indranil Biswas}

\address{Department of Mathematics, Shiv Nadar University, NH91, Tehsil
Dadri, Greater Noida, Uttar Pradesh 201314, India}

\email{indranil.biswas@snu.edu.in, indranil29@gmail.com}

\author[M. Kumar]{Manish Kumar}

\address{Statistics and Mathematics Unit, Indian Statistical Institute,
Bangalore 560059, India}

\email{manish@isibang.ac.in}

\author[A. J. Parameswaran]{A. J. Parameswaran}

\address{Kerala School of Mathematics, Kunnamangalam PO, Kozhikode, Kerala, 673571, India}

\email{param@ksom.res.in}

\subjclass[2010]{14H30, 14H60, 14E20}

\keywords{Parabolic stability, direct image, socle, Harder-Narasimhan filtration}

\date{}

\begin{abstract}
Let $f\, :\, X\, \longrightarrow\, Y$ be a dominant generically smooth morphism between irreducible
smooth projective curves over an algebraically closed field $k$ such that ${\rm Char}(k)\,>\,
\text{degree}(f)$ if the characteristic of $k$ is nonzero. We prove that $(f_*{\mathcal O}_X)/{\mathcal O}_Y$
equipped with a natural parabolic structure is parabolic polystable. Several conditions are given
that ensure that the parabolic vector bundle $(f_*{\mathcal O}_X)/{\mathcal O}_Y$ is actually parabolic stable.
\end{abstract}

\maketitle

\section{Introduction}

This work is inspired by \cite{CLV}. We briefly recall the set-up and the results of \cite{CLV}.
Let
$$
f\,\,:\,\, X\,\, \longrightarrow\,\, Y
$$
be a dominant generically smooth morphism between irreducible smooth projective curves over
an algebraically closed field $k$. We assume that
${\rm Char}(k)\, >\, {\rm degree}(f)$ if ${\rm Char}(k)\, >\, 0$. Then the natural short exact
sequence of vector bundles on $Y$
$$
0\, \longrightarrow\,{\mathcal O}_Y \, \longrightarrow\, f_*{\mathcal O}_X \, \longrightarrow\,
(f_*{\mathcal O}_X)/{\mathcal O}_Y \, \longrightarrow\,0
$$
splits. In this set-up, \cite{CLV} gives numerous conditions under which $(f_*{\mathcal O}_X)/{\mathcal O}_Y$
is semistable or stable. Here are the main results of \cite{CLV}:
\begin{enumerate}
\item If $f$ factors nontrivially as $f\, =\, g\circ h$, where $h$ is \'etale and the homomorphism
of \'etale fundamental groups induced by $g$ is surjective (such a map is
called primitive), then $(f_*{\mathcal O}_X)/{\mathcal O}_Y$
is not semistable.

\item If $f$ is \'etale, then $(f_*{\mathcal O}_X)/{\mathcal O}_Y$ is semistable, and
$(f_*{\mathcal O}_X)/{\mathcal O}_Y$ is stable if the representation of the Galois group
$\text{Gal}(X/Y)$ on the fiber of $f$ over an unbranched point of $Y$ is irreducible.

\item Let $f$ be a general primitive covering of degree $r$, where
$\text{genus}(Y)\,=\, h$. Then $(f_*{\mathcal O}_X)/{\mathcal O}_Y$ is semistable if
$h\,=\,1$, and $(f_*{\mathcal O}_X)/{\mathcal O}_Y$ is stable if $h\, \geq\, 2$.
\end{enumerate}

The vector bundle $f_*{\mathcal O}_X$ captures the geometry of the covering map $f$ in an essential
way (cf. \cite{BP}).

The above vector bundle $(f_*{\mathcal O}_X)/{\mathcal O}_Y$ has a natural parabolic structure. This
parabolic structure can be roughly explained as follows. Assume that $\psi\, :\, Z\, \longrightarrow\, Y$
is a ramified Galois cover such that the natural map
$$
\widetilde{f}\ :\ \widetilde{Z\times_Y X}\ \longrightarrow\ Z
$$
is \'etale, where $\widetilde{Z\times_Y X}$ is the normalizer of the fiber product $Z\times_Y X$.
Then the vector bundle $(\widetilde{f}_*{\mathcal O}_{\widetilde{Z\times_Y X}})/{\mathcal O}_Z$ is an equivariant vector
bundle over $Z$ for the action of the Galois group $\text{Gal}(\psi)$ on $Z$. Using the correspondence between
parabolic vector bundles and equivariant vector bundles, this equivariant vector bundle on $Z$ produces
a parabolic vector bundle on $Y$. This parabolic vector bundle on $Y$ coincides with $(f_*{\mathcal O}_X)/{\mathcal O}_Y$
equipped with the parabolic structure.

We prove that $(f_*{\mathcal O}_X)/{\mathcal O}_Y$ equipped with the parabolic structure is always
parabolic polystable (see Theorem \ref{thm1}). This generalizes the results of \cite{CLV} to a wider context.

We then give several conditions under which $(f_*{\mathcal O}_X)/{\mathcal O}_Y$ equipped with the
parabolic structure is actually parabolic stable.

If the action of the Galois group $\Gamma$ of the Galois closure of $f$ on the space of functions on
a fiber of $f$ over an unbranched point of $f$ modulo scalars is irreducible, then
$(f_*{\mathcal O}_X)/{\mathcal O}_Y$ equipped with the parabolic structure is parabolic stable
(see Proposition \ref{prop2}).

If the Galois group $\Gamma$ is the symmetric group $S_r$ or the alternating group $A_r$,
where $r$ is the degree of $f$, then $(f_*{\mathcal O}_X)/{\mathcal O}_Y$ equipped with the
parabolic structure is parabolic stable (see Theorem \ref{thm2}).

If $f$ is genuinely ramified (same as primitive) and Morse (see \cite{BKP}), then
$(f_*{\mathcal O}_X)/{\mathcal O}_Y$ equipped with the
parabolic structure is parabolic stable (see Corollary \ref{cor1}).
Note that a general genuinely ramified map is Morse. Hence the parabolic analogue of the main
result (3) of \cite{CLV} holds. Also note that for a given genuinely ramified map $f$, it is possible to check whether it is Morse or not. So parabolic stability can be decided for $f$ while stability is existential.

\section{Direct image and parabolic structure}\label{se2}

\subsection{Direct image of structure sheaf}\label{se2.1}

The base field $k$ is algebraically closed. Let $X$ and $Y$ be irreducible smooth projective curves
defined over $k$ and
\begin{equation}\label{e1}
f\,\,:\,\, X\,\, \longrightarrow\,\, Y
\end{equation}
a dominant generically smooth morphism such that 
$${\rm Char}(k)\, >\, r\,:=\, \text{degree}(f)$$ if ${\rm Char}(k)\,\not=\, 0$. Consider the vector bundle
$f_*{\mathcal O}_X \,\longrightarrow\, Y$ of rank $r$. There is a natural homomorphism $\alpha\, :\, {\mathcal O}_Y\,
\longrightarrow\,f_*{\mathcal O}_X$ given by the identity map
$f^*{\mathcal O}_Y\,=\, {\mathcal O}_X\, \longrightarrow\, {\mathcal O}_X$ \cite[p.~110]{Ha}. On the other hand, there is
the trace morphism $\beta\, :\, f_*{\mathcal O}_X \, \longrightarrow\, {\mathcal O}_Y$ which is constructed using
addition of functions. The
composition $\beta\circ\alpha$ coincides with $r\cdot {\rm Id}_{{\mathcal O}_Y}$, and hence we have the decomposition
\begin{equation}\label{e2}
f_*{\mathcal O}_X \,\,=\,\, {\mathcal O}_Y \oplus \text{kernel}(\beta)
\end{equation}
(recall that ${\rm Char}(k)\, >\, r$ if it is nonzero). For convenience, denote
$E\,\, :=\,\, \text{kernel}(\beta)\,\, \subset\,\, f_*{\mathcal O}_X$,
so \eqref{e2} gives
\begin{equation}\label{e3}
f_*{\mathcal O}_X \,\,=\,\, {\mathcal O}_Y \oplus E.
\end{equation}

The vector bundle $E$ has a natural parabolic structure which will be briefly recalled.

\subsection{Parabolic bundles}\label{se2.2}

As before, $X$ is an irreducible smooth projective curve over $k$. Let
$$
D\,\,:=\,\, \{x_1,\, \cdots,\, x_\ell\}\,\, \subset\,\, X
$$
be a finite subset. Take a vector bundle $V$ on $X$. A quasi-parabolic structure on $V$ is a
strictly decreasing filtration of subspaces
\begin{equation}\label{e4}
V_{x_i}\,=\, V^1_i\, \supsetneq\, V^2_i \,\supsetneq\, \cdots\, \supsetneq\,
V^{n_i}_i \, \supsetneq\, V^{n_i+1}_i \,=\, 0
\end{equation}
for every $1\, \leq\, i\, \leq\, \ell$; here $V_{x_i}$ denotes the fiber
of the vector bundle $V$ over the point $x_i\,\in\, D$. A \textit{parabolic structure} on $V$ is a
quasi-parabolic structure as above together with $\ell$ increasing sequences of rational numbers
\begin{equation}\label{e5}
0\, \leq\, \alpha_{i,1}\, <\, \alpha_{i,2}\, <\,
\cdots\, < \, \alpha_{i,n_i}\, < 1\, , \ \ 1\, \leq\, i\, \leq\, \ell \, ;
\end{equation}
the rational number $\alpha_{i,j}$ is called the parabolic weight of the subspace $V^j_i$ in
the quasi-parabolic filtration in \eqref{e4}. The multiplicity of a parabolic weight $\alpha_{i,j}$ at
$x_i$ is defined to be the dimension of the $k$-vector space $V^j_i/V^{j+1}_i$.
A parabolic vector bundle is a vector bundle equipped with a parabolic structure.
The subset $D$ is called the parabolic divisor. (See \cite{MS}, \cite{MY}.)

Let $V_*$ denote the parabolic vector bundle defined by \eqref{e4} and \eqref{e5}. The parabolic degree of
$V_*$ is defined to be
$$
\text{par-deg}(V_*)\,\,:=\,\, \text{degree}(V) +\sum_{i=1}^\ell\sum_{j=1}^{n_i} \alpha_{i,j}
\cdot\dim V^j_i/V^{j+1}_i,
$$
and the parabolic weight of $V_*$ is defined to be $\sum_{i=1}^\ell\sum_{j=1}^{n_i} \alpha_{i,j}
\cdot\dim V^j_i/V^{j+1}_i$.
The rational number $$\mu(V_*)\,:=\, \frac{\text{par-deg}(V_*)}{{\rm rank}(V)}$$ is called the parabolic
slope of $V_*$.

Any subbundle $W\, \subset\, V$ has a parabolic structure induced by the parabolic structure of $V_*$; the
resulting parabolic bundle will be denoted by $W_*$. The parabolic
vector bundle $V_*$ is called \textit{stable} (respectively, \textit{semistable}) if
$$
\mu(W_*)\, <\, \mu(V_*) \ \ \ \, {\rm (}\text{respectively, }\ \ \mu(W_*)\, \leq\, \mu(V_*){\rm )}
$$
for all subbundle $0\, \not=\, W\, \subsetneq\, V$. The parabolic
vector bundle $V_*$ is called \textit{polystable} if it is a direct sum of stable parabolic bundles of same parabolic slope.
(See \cite{MS}, \cite{MY}.)

\subsection{Parabolic structure on a direct image}\label{se2.3}

Take $f$ as in \eqref{e1}. We will show that the vector bundle $E$ in \eqref{e3} has a natural parabolic 
structure. This will be done by first constructing a natural parabolic structure on the direct image 
$f_*{\mathcal O}_X$.

Let
\begin{equation}\label{e6}
R\, \subset\, X
\end{equation}
be the ramification locus of $f$. The given condition that ${\rm Char}(k)\, >\, r$, if it
is nonzero, implies that $f$ is
tamely ramified. For any point $x\,\in\, X$, let $m_x\, \geq\, 1$ be the multiplicity
of $f$ at $x$, so we have $m_x\, \geq\, 2$ if and only if $x\,\in\, R$. Let
\begin{equation}\label{e7}
\Delta\,\,=\,\, f (R) \,\, \subset\,\, Y
\end{equation}
be the branch locus for $f$, which will be the parabolic divisor for the parabolic structure on $f_*{\mathcal O}_X$.

For any point $y\, \in\, Y$, the fiber $(f_*{\mathcal O}_X)_y$ of the vector bundle $f_*{\mathcal O}_X$ over $y$
has a decomposition
\begin{equation}\label{e8}
(f_*{\mathcal O}_X)_y\,=\, \bigoplus_{x\,\in \,f^{-1}(y)} F_x
\end{equation}
such that $\dim F_x\,=\, m_x$, where $m_x$ is defined above (see \cite[p.~19562, (4.4)]{AB}); here
$f^{-1}(y)$ denotes the set-theoretic inverse image.
To describe the subspace $F_x\, \subset\, (f_*{\mathcal O}_X)_y$ in \eqref{e8}, consider the homomorphism
\begin{equation}\label{e9}
f_* \left({\mathcal O}_X\left(-\sum_{z \in f^{-1}(y)\setminus x} m_z z\right)\right)\,\, \longrightarrow\,\,
f_*{\mathcal O}_X
\end{equation}
given by the natural inclusion of ${\mathcal O}_X\left(-\sum_{z \in f^{-1}(y)\setminus x} m_z z\right)$ in
${\mathcal O}_X$. The subspace $F_x\,\subset\, (f_*{\mathcal O}_X)_y$ in \eqref{e8} is the image of the
homomorphism of fibers
$$
f_* \left({\mathcal O}_X\left(-\sum_{z \in f^{-1}(y)\setminus x} m_z z\right)\right)_y\,\,
\longrightarrow\,\, (f_*{\mathcal O}_X)_y
$$
corresponding to the homomorphism of coherent sheaves in \eqref{e9}.

The parabolic structure on $(f_*{\mathcal O}_X)_y$ will be described by giving a parabolic structure on each
direct summand $F_x$ in \eqref{e8} and then taking their direct sum. To give a parabolic structure on $F_x$, first
note that for any $j\, \geq\, 0$, there is a natural injective homomorphism of coherent sheaves
\begin{equation}\label{e10}
f_*\left( {\mathcal O}_X\left(-jx -\sum_{z \in f^{-1}(y)\setminus x}m_z z \right)\right)\,\, \longrightarrow\,\, f_*{\mathcal O}_X
\end{equation} (see \eqref{e9}).
The image of the fiber $f_*\left( {\mathcal O}_X\left(-jx -\sum_{z \in f^{-1}(y)\setminus x}m_z z \right)\right)_y$ in $(f_*{\mathcal O}_X)_y$ by the homomorphism in \eqref{e10} will be denoted by $\mathbf{F}(x,j)$.
We have a filtration of subspaces of $F_x$:
\begin{equation}\label{e13}
F_x\,:=\, \mathbf{F}(x,0) \, \supset \, \mathbf{F}(x,1) \, \supset \, \mathbf{F}(x,2) \,
\supset\, \cdots\, \supset\,\mathbf{F}(x, m_x-1)\, \supset\, \mathbf{F}(x, m_x) \,=\,0.
\end{equation}
Note that
$$
f_* \left( {\mathcal O}_X\left(-\sum_{z \in f^{-1}(y)}m_z z\right) \right)\,\,=\,\, (f_*{\mathcal O}_X)\otimes {\mathcal O}_Y(-y)
$$
by the projection formula, and hence we have $\mathbf{F}(x,m_x) \,=\,0$. The quasiparabolic filtration
of $F_x$ is the one obtained in \eqref{e13}. The parabolic weight of the subspace
$\mathbf{F}(x,j)\, \subset\, F_x$
in \eqref{e13} is $\frac{j}{m_x}$. (See \cite{AB}.) The parabolic structure on $(f_*{\mathcal O}_X)_y$ is given by the
parabolic structure on the subspaces $F_x$ using the decomposition in \eqref{e8}. More precisely, for any $0\,
\leq\, \lambda\, <\, 1$,
the subspace of $(f_*{\mathcal O}_X)_y$ of parabolic weight at least $\lambda$ is the direct sum of subspaces of $F_x$,
$x\,\in \,f^{-1}(y)$, of parabolic weight at least $\lambda$.

Let $(f_*{\mathcal O}_X)_*$ denote the parabolic vector bundle of rank $r$ defined by the above parabolic 
structure on $f_*{\mathcal O}_X$. From the construction of the parabolic structure of $(f_*{\mathcal O}_X)_*$ 
it follows immediately that the induced parabolic structure on the direct summand ${\mathcal O}_Y$ in \eqref{e3} 
is the trivial one (it has no nonzero parabolic
weight). Consider the parabolic structure on the direct summand $E$ in \eqref{e3} induced by the 
parabolic structure of $(f_*{\mathcal O}_X)_*$. The resulting parabolic bundle will be denoted by $E_*$.
Note that we have
\begin{equation}\label{e14}
(f_*{\mathcal O}_X)_*\,\,=\,\, {\mathcal O}_Y\oplus E_*;
\end{equation}
In Section \ref{se3.2} we will show that $E_*$ is parabolic polystable.

\section{Parabolic pullback and polystability of direct image}\label{se3}

\subsection{Pullback of parabolic bundles}\label{se3.1}

Let $M$ and $N$ be irreducible smooth projective curves defined over $k$, and let
\begin{equation}\label{vp}
\varphi\, :\, M\, \longrightarrow\, N
\end{equation}
be a generically smooth tamely ramified covering map. Take any
parabolic vector bundle $W_*$ on $N$. Then we have the pulled back parabolic vector bundle $\varphi^*W_*$
on $M$; see \cite[Section~3]{AB} for its construction. If $D$ is the parabolic divisor for $W_*$, then the
parabolic divisor of $\varphi^*W_*$ is the reduced divisor $\varphi^{-1}(D)_{\rm red}$. To give an idea
of the pullback $\varphi^*W_*$, let $z\,\in\, D$ be a parabolic point of $W_*$. Assume that the
quasiparabolic filtration at $z$ is 
\begin{equation}\label{f1}
W_{z}\,=\, W_1\, \supsetneq\, W_2 \,\supsetneq\, \cdots\, \supsetneq\,
W_n \, \supsetneq\, W_{n+1} \,=\, 0
\end{equation}
(see \eqref{e4}), and the parabolic weights are
$$
0\, \leq\, \alpha_{1}\, <\, \alpha_{2}\, <\, \cdots\, < \, \alpha_{n}\, < 1
$$
(see \eqref{e5}). Then ``roughly'' the underlying vector bundle for the parabolic
bundle $\varphi^*W_*$ is $\varphi^*W$, where
$W$ is the vector bundle underlying $W_*$, and the quasiparabolic filtration of $\varphi^*W_*$ at any
$y\, \in\, \varphi^{-1}(z)_{\rm red}$ is ``roughly'' the same as in \eqref{f1} (since $(\varphi^*W)_y\,=\, W_z$,
the filtration in \eqref{f1} gives a filtration of subspaces of $(\varphi^*W)_y$). The parabolic
weights are ``roughly''
$$
0\, \leq\, m\cdot\alpha_{1}\, <\, m\cdot\alpha_{2}\, <\, \cdots\, < \, m\cdot\alpha_{n},
$$
where $m$ is the multiplicity of $\varphi$ at $y$. But now the problem is that some $m\cdot\alpha_i$ might exceed
$1$. Those weights are decreased by $1$ and it is compensated by performing a forward elementary transformation
(also called Hecke transformation) on the corresponding subspace of the filtration. This process is
repeated enough number of times until all the parabolic weights are brought under $1$.

To explain this more concretely,
we consider the example of parabolic line bundles. Take a line bundle $L$ on $N$. Set the parabolic points
to be $\{x_1,\, \cdots,\, x_\ell\}\, \in\, N$, and equip $L$ with the parabolic weight $0\, <\, \alpha_i\,
<\, 1$ at $x_i$; the resulting parabolic line bundle will be denoted by $L_*$. Let $\{y_{i,1},\, \cdots\,
y_{i, b_i}\}\, =\, \varphi^{-1}(x_i)_{\rm red}\, \subset\, M$ be the reduced inverse image. The multiplicity
of $\varphi$ at $y_{i,j}$ is $m_{i,j}$. Then the underlying line bundle for the pullback $\varphi^*L_*$ is
$$
(\varphi^*L)\otimes {\mathcal O}_M\left(\sum_{i=1}^\ell \sum_{j=1}^{b_i}\lfloor m_{i,j}\cdot\alpha_i\rfloor
\cdot y_{i,j}\right),
$$
where $\lfloor{\lambda}\rfloor$ denotes the integral part of $\lambda$,
and the parabolic weight at $y_{i,j}$ is $m_{i,j}\cdot\alpha_i - \lfloor m_{i,j}\cdot\alpha_i\rfloor$. (See
\cite[p.~19554]{AB}.) Note that tensoring with ${\mathcal O}_M(\lfloor m_{i,j}\cdot\alpha_i\rfloor \cdot y_{i,j})$
is same as tensoring with ${\mathcal O}_M(y_{i,j})$ iteratively $\lfloor m_{i,j}\cdot\alpha_i\rfloor$ times.

We have
\begin{equation}\label{e17a}
\text{par-deg}(\varphi^* W_*)\,\,=\,\, \text{degree}(\varphi)\cdot \text{par-deg}(W_*)
\end{equation}
(see \cite[p.~19560, Lemma 3.5(1)]{AB}).

Now assume that that $\varphi$ in \eqref{vp} is a ramified Galois covering such that the order of the
Galois group $\text{Gal}(\varphi)$ is coprime to ${\rm Char}(k)$ if ${\rm Char}(k)$
is nonzero. Consider the parabolic vector bundle
$({\varphi}_*{\mathcal O}_M)_*$ constructed in Section \ref{se2.3}. Then
\begin{equation}\label{e17}
\varphi^*(\varphi_*{\mathcal O}_M)_*\,\,=\,\, {\mathcal O}_M\otimes_k k[\text{Gal}(\varphi)]
\end{equation}
(see \cite[p.~19566, Proposition 4.2(2)]{AB}). In particular, the parabolic structure on
$\varphi^*(\varphi_*{\mathcal O}_M)_*$ is the trivial one (there are no nonzero parabolic weights).

\subsection{Polystability of direct image}\label{se3.2}

Take $f$ as in \eqref{e1} and consider the parabolic vector bundle $(f_*{\mathcal O}_X)_*$
constructed in Section \ref{se2.3}.

\begin{lemma}\label{lem1}
The parabolic degree of the parabolic vector bundle $(f_*{\mathcal O}_X)_*$ is zero.
\end{lemma}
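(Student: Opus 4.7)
The plan is to compute the parabolic degree directly from the definition by adding the ordinary degree $\deg(f_*\mathcal{O}_X)$ to the total parabolic weight, and to observe that these two quantities cancel by Riemann--Hurwitz.

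First I would compute $\deg(f_*\mathcal{O}_X)$ using the fact that direct image preserves Euler characteristic: $\chi(Y, f_*\mathcal{O}_X) = \chi(X, \mathcal{O}_X) = 1 - g_X$. Combined with Riemann--Roch on $Y$, this yields
\[
\deg(f_*\mathcal{O}_X) \;=\; 1 - g_X - r(1 - g_Y).
\]

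Next I would compute the total parabolic weight. The key observation is that for any $y \in \Delta$ and any $x \in f^{-1}(y)$, the subspace $F_x \subset (f_*\mathcal{O}_X)_y$ has dimension $m_x$, while the filtration in \eqref{e13} comprises exactly $m_x + 1$ successive subspaces from $F_x$ down to $0$; strictness then forces each graded piece $\mathbf{F}(x,j)/\mathbf{F}(x,j+1)$ to be one-dimensional. Since the weight assigned to $\mathbf{F}(x,j)$ is $j/m_x$, the weight contribution from $x$ equals
\[
\sum_{j=0}^{m_x-1} \frac{j}{m_x} \;=\; \frac{m_x - 1}{2}.
\]
Including unramified points (which contribute $0$), the total parabolic weight of $(f_*\mathcal{O}_X)_*$ becomes
\[
\frac{1}{2}\sum_{x\in X}(m_x - 1) \;=\; \frac{1}{2}\deg(R_f),
\]
where $R_f$ is the ramification divisor of $f$. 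The tameness hypothesis ensures that $R_f$ is indeed the sum $\sum_x (m_x-1)x$.

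Finally, Riemann--Hurwitz gives $\deg(R_f) = 2g_X - 2 - r(2g_Y - 2)$, so the total weight is $g_X - 1 - r(g_Y - 1)$, which is precisely $-\deg(f_*\mathcal{O}_X)$. Adding the two terms produces $0$. I do not expect a real obstacle here; the single point requiring care is the dimension count for the graded pieces of \eqref{e13}, which underpins the formula $(m_x-1)/2$ for the local weight contribution at $x$. An alternative, more conceptual route would be to pull back along a suitable ramified Galois cover $\psi : Z \longrightarrow Y$ trivializing the ramification of $f$ and invoke \eqref{e17} together with \eqref{e17a}, but the direct Riemann--Hurwitz computation is shorter and self-contained.
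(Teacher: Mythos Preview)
Your proposal is correct and follows essentially the same approach as the paper: compute $\deg(f_*\mathcal{O}_X)$ via $\chi$ and Riemann--Roch, compute the local parabolic weight contribution at each $x$ as $(m_x-1)/2$, and observe the cancellation via Riemann--Hurwitz. Your explicit justification that each graded piece of the filtration \eqref{e13} is one-dimensional is a detail the paper leaves implicit in writing the sum $\frac{1}{m_x}(1+2+\cdots+(m_x-1))$.
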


\begin{proof}
Take a ramification point $x\, \in\, X$ of $f$. The contribution of $x$ to the parabolic weight of
$(f_*{\mathcal O}_X)_*$ is
\begin{equation}\label{e15}
\frac{1}{m_x}(1 +2+ \ldots + (m_x-1))\,\,=\,\, \frac{m_x-1}{2}.
\end{equation}
On the other hand,
$$
\chi(f_*{\mathcal O}_X) \,=\, \chi({\mathcal O}_X) \,=\, 1- g_X,
$$
where $g_X$ is the genus of $X$. So by Riemann--Roch for $f_*{\mathcal O}_X$,
$$
\text{degree}(f_*{\mathcal O}_X) \,=\, 1- g_X - r(1-g_Y)\,=\, r(g_Y-1)-g_X+1,
$$
where $g_Y$ is the genus of $Y$ and $r\,=\, {\rm degree}(f)$. Now, by Riemann--Hurwitz formula, 
\begin{equation}\label{e16}
\text{degree}(f_*{\mathcal O}_X) \,\,=\,\, r(g_Y-1)-g_X+1 \,\,=\,\, - \frac{1}{2}\sum_{x\in X} (m_x-1).
\end{equation}
Combining \eqref{e15} and \eqref{e16} it follows that $\text{par-deg}((f_*{\mathcal O}_X)_*)\,=\, 0$.
\end{proof}

\begin{proposition}\label{prop1}
The parabolic vector bundle $(f_*{\mathcal O}_X)_*$ is polystable.
\end{proposition}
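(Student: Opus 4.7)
The strategy is to reduce to an étale situation by passing to a suitable ramified Galois cover of $Y$, where the parabolic structure disappears and the problem becomes the classical statement that the direct image of the structure sheaf under an étale morphism is polystable. First I would, by standard tame ramification theory, choose a ramified Galois covering $\psi: Z \to Y$ with Galois group $G$ of order coprime to $\mathrm{Char}(k)$, whose branch locus contains $\Delta$ and whose ramification index at each point above $f(x)\in \Delta$ is divisible by $m_x$ for every $x\in R$. The point of this choice is that the normalization $\widetilde{Z\times_Y X}$ of the fiber product admits an \'etale projection $\widetilde{f}: \widetilde{Z\times_Y X} \to Z$, exactly the cover referred to in the Introduction.

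Next I would identify the pullback $\psi^*(f_*\mathcal{O}_X)_*$ with the ordinary vector bundle $\widetilde{f}_*\mathcal{O}_{\widetilde{Z\times_Y X}}$ equipped with the trivial parabolic structure. This is the non-Galois analogue of the formula \eqref{e17}: one verifies it from the explicit description of the parabolic structure in Section \ref{se2.3} (the subspaces $F_x$ and $\mathbf{F}(x,j)$) combined with flat base change along $\psi$, the Hecke modifications built into the parabolic pullback of Section \ref{se3.1} matching precisely the normalization step in passing from $Z\times_Y X$ to $\widetilde{Z\times_Y X}$. Granted this identification, the bundle $\widetilde{f}_*\mathcal{O}_{\widetilde{Z\times_Y X}}$ on $Z$ is polystable of degree $0$: since $\widetilde{f}$ is \'etale, it corresponds to the permutation representation of $\pi_1^{\mathrm{\acute{e}t}}(Z)$ on the fiber of $\widetilde{f}$, which decomposes as a direct sum of irreducible representations; each irreducible summand yields a stable vector bundle of degree $0$ on $Z$, so the whole bundle is polystable of slope~$0$.

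Finally, to descend polystability from $Z$ back to $Y$, I would use the fact that parabolic slopes scale by $\deg \psi$ under $\psi^*$ (by \eqref{e17a}) together with the standard principle that a parabolic bundle $V_*$ whose Galois pullback $\psi^*V_*$ is polystable is itself parabolic polystable: a destabilizing parabolic subbundle of $V_*$ would pull back to a destabilizing subbundle of $\psi^*V_*$, giving parabolic semistability; then, averaging over $G$ applied to a Jordan--H\"older filtration (the socle argument) together with the vanishing of the parabolic degree from Lemma \ref{lem1} upgrades this to polystability of $(f_*\mathcal{O}_X)_*$ and hence, via the splitting \eqref{e14}, of $E_*$. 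The main obstacle I expect is the identification carried out in the second paragraph: establishing that the parabolic pullback by $\psi$ sends $(f_*\mathcal{O}_X)_*$ precisely to the trivial parabolic bundle $\widetilde{f}_*\mathcal{O}_{\widetilde{Z\times_Y X}}$. This amounts to extending \cite[Proposition 4.2(2)]{AB}, stated for Galois $f$, to the general tamely ramified $f$ by working fiber-by-fiber over the points of $\Delta$; once this compatibility is in place, the remaining pieces assemble routinely.
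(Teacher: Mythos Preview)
Your outline is correct and follows the same overall architecture as the paper's proof: pull back along a tame Galois cover to kill the parabolic weights, verify polystability upstairs, then descend via the socle together with averaging over the Galois group. The descent half of your argument matches the paper almost verbatim.

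The paper handles the pullback step differently, and in a way that sidesteps exactly the obstacle you flag. Rather than taking an auxiliary cover $\psi:Z\to Y$ and proving the identification $\psi^*(f_*\mathcal{O}_X)_*\cong \widetilde{f}_*\mathcal{O}_{\widetilde{Z\times_Y X}}$ for general $f$, the paper works with the Galois closure $\phi:M\to Y$ of $f$ itself. Using the trace map for $\delta:M\to X$ it exhibits $(f_*\mathcal{O}_X)_*$ as a parabolic direct summand of $(\phi_*\mathcal{O}_M)_*$. Now $\phi$ \emph{is} Galois, so \eqref{e17} applies directly and gives $\phi^*(\phi_*\mathcal{O}_M)_*=\mathcal{O}_M\otimes_k k[\Gamma]$. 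Hence $\phi^*(f_*\mathcal{O}_X)_*$ sits as a degree-zero subbundle (by Lemma~\ref{lem1} and \eqref{e17a}) of a trivial bundle, which forces it to be polystable with trivial parabolic structure---no separate identification needed. In other words, the paper trades your extension of \cite[Proposition~4.2(2)]{AB} to non-Galois $f$ for the much softer observation that $(f_*\mathcal{O}_X)_*$ embeds in $(\phi_*\mathcal{O}_M)_*$. Your route would work and is conceptually natural (it is, after all, how the Introduction motivates the parabolic structure), but the paper's detour through the Galois closure is shorter because it only invokes results already available in \cite{AB}.
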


\begin{proof}
Let 
\begin{equation}\label{e18}
\phi\,\, :\,\, M\,\, \longrightarrow\,\, Y
\end{equation}
be the Galois closure of $f$ and $\Gamma\,=\, {\rm Gal}(\phi)$ the Galois group. Let
\begin{equation}\label{e19}
\delta\,\, :\,\, M\,\, \longrightarrow\,\, X
\end{equation}
be the natural morphism, so $\phi\,=\, f\circ\delta$. Note that $\delta$ is also a Galois covering with
${\rm Gal}(\delta)$ a subgroup of $\Gamma$. Also since ${\rm degree}(f)\,=\,r$, it follows that
$\Gamma$ is a subgroup of the symmetric group $S_r$. The degree of $\phi$ divides $r!$, so the given
condition that ${\rm Char}(k)\, >\, r$ (if it is nonzero) implies that
$\text{degree}(\phi)$ is coprime to ${\rm Char}(k)$ (if it is
nonzero). Since $\phi$ in \eqref{e18} is Galois, this
implies that $\phi$ is tamely ramified.

We have a natural inclusion map $f_*{\mathcal O}_X\, \hookrightarrow\, \phi_*{\mathcal O}_M$. In fact,
$f_*{\mathcal O}_X$ is a direct summand of $\phi_*{\mathcal O}_M$. To see this, consider the trace morphism
\begin{equation}\label{e20}
h\,\, :\,\, \phi_*{\mathcal O}_M \,\, \longrightarrow\,\, f_*{\mathcal O}_X.
\end{equation}
The composition of maps
$$
f_*{\mathcal O}_X\,\, \hookrightarrow\,\, \phi_*{\mathcal O}_M \,\, \longrightarrow\,\, f_*{\mathcal O}_X
$$
coincides with multiplication by $\text{degree}(\delta)$ (see \eqref{e19}). Since $\text{degree}(\phi)$ is
coprime to ${\rm Char}(k)$ (if it is nonzero), we know that $\text{degree}(\delta)$ is also coprime to
${\rm Char}(k)$ (if it is nonzero). So
multiplication by $\text{degree}(\delta)$ is an automorphism. Hence we have
\begin{equation}\label{e21}
\phi_*{\mathcal O}_M \,\,=\, f_*{\mathcal O}_X \oplus \text{kernel}(h),
\end{equation}
where $h$ is the homomorphism in \eqref{e20}.

In view of \eqref{e21}, from the construction of parabolic structures on $f_*{\mathcal O}_X$ and 
$\phi_*{\mathcal O}_M$ it follows immediately that $(f_*{\mathcal O}_X)_*$ is a parabolic subbundle of 
$(\phi_*{\mathcal O}_M)_*$. From this it can be deduced that the parabolic pullback $\phi^*(f_*{\mathcal
O}_X)_*$ is a parabolic subbundle of $\phi^*(\phi_*{\mathcal O}_M)_*$. Indeed, this is a straight-forward
consequence of the construction of the pullback of parabolic bundles. From \eqref{e17} we know
that $\phi^*(\phi_*{\mathcal O}_M)_*$ does not have any nonzero parabolic weight. Consequently, its
parabolic subbundle $\phi^*(f_*{\mathcal O}_X)_*$ does not have any nonzero parabolic weight.

We note that Lemma \ref{lem1} and \eqref{e17a} combine together to imply that
\begin{equation}\label{e22}
\text{degree}(\phi^*(f_*{\mathcal O}_X)_*)\,\,=\,\, \text{par-deg}(\phi^*(f_*{\mathcal O}_X)_*)\,\,=\,\, 0
\end{equation}
(recall that $\phi^*(f_*{\mathcal O}_X)_*$ does not have any nonzero parabolic weight). From
\eqref{e17} it follows that $\phi^*(\phi_*{\mathcal O}_M)_*$ is a polystable vector bundle of degree zero. Since
$\text{degree}(\phi^*(f_*{\mathcal O}_X)_*)\,=\,0\,=\, \text{degree}(\phi^*(\phi_*{\mathcal O}_M)_*)$
(see \eqref{e22}), and
$\phi^*(f_*{\mathcal O}_X)_*$ is a subsheaf of the polystable vector bundle $\phi^*(\phi_*{\mathcal O}_M)_*$,
we conclude that $\phi^*(f_*{\mathcal O}_X)_*$ is a polystable vector bundle of degree zero. In particular,
$\phi^*(f_*{\mathcal O}_X)_*$ is semistable.

We will first show that the parabolic vector bundle $(f_*{\mathcal O}_X)_*$ is semistable. Assume that
$(f_*{\mathcal O}_X)_*$ is not parabolic semistable. Let
$$
W_*\,\, \subset\,\, (f_*{\mathcal O}_X)_*
$$
be a parabolic subbundle that violates the inequality for the semistability, meaning
$$
\text{par-deg}(W_*)\,\, >\,\, \text{par-deg}((f_*{\mathcal O}_X)_*)\,\,=\,\, 0
$$
(see Lemma \ref{lem1} for the above equality). Therefore, from \eqref{e17a} it follows immediately
that
$$
\text{par-deg}(\phi^*W_*)\,\, >\,\, \text{par-deg}(\phi^*(f_*{\mathcal O}_X)_*).
$$
Since this contradicts the above observation that $\phi^*(f_*{\mathcal O}_X)_*$ is semistable, we conclude
that $(f_*{\mathcal O}_X)_*$ is parabolic semistable.

Since $(f_*{\mathcal O}_X)_*$ is parabolic semistable, it has a maximal parabolic polystable subbundle
of same parabolic slope which is known as the socle (see \cite[p.~23, Lemma 1.5.5]{HL}). Let
\begin{equation}\label{e23}
S_*\,\, \subset\,\,(f_*{\mathcal O}_X)_*
\end{equation}
be the socle of $(f_*{\mathcal O}_X)_*$. Consider the parabolic subbundle
\begin{equation}\label{e24}
\phi^*S_*\,\, \subset\,\, \phi^* (f_*{\mathcal O}_X)_* .
\end{equation}
Note that $\phi^*S_*$ does not have any nonzero parabolic weights because $\phi^* (f_*{\mathcal O}_X)_*$
does not have any nonzero parabolic weights. Let
\begin{equation}\label{e25}
\textbf{q}\,\, :\,\, \phi^* (f_*{\mathcal O}_X)_*\,\, \longrightarrow\,\,
(\phi^* (f_*{\mathcal O}_X)_*)/\phi^*S_*
\end{equation}
be the quotient map. Since $\text{par-deg}(S_*)\,=\, 0$, from \eqref{e17a} it
follows that
$$
\text{degree}(\phi^*S_*)\,\,=\,\, \text{par-deg}(\phi^*S_*)\,\,=\,\, 0;
$$
note that $(\phi^* (f_*{\mathcal O}_X)_*)/\phi^*S_*$ is locally free because
$(\phi^* (f_*{\mathcal O}_X)_*)$ is semistable of degree zero.
The action of the Galois group $\Gamma$ on $\phi^* (f_*{\mathcal O}_X)_*$ evidently
preserves the subbundle $\phi^* S_*$. So the action of $\Gamma$ on $\phi^* (f_*{\mathcal O}_X)_*$
produces an action of $\Gamma$ on $(\phi^* (f_*{\mathcal O}_X)_*)/\phi^*S_*$.

As $\phi^*(f_*{\mathcal O}_X)_*$ is a polystable vector bundle of degree zero, and
$\text{degree}(\phi^*S_*)\, =\, 0$, the subbundle
$\phi^*S_*\,\subset\, \phi^* (f_*{\mathcal O}_X)_*$ admits a complement. In other words,
there is a homomorphism
\begin{equation}\label{eta}
\eta\,\,:\,\, (\phi^* (f_*{\mathcal O}_X)_*)/\phi^*S_*\,\, \longrightarrow\,\,
\phi^* (f_*{\mathcal O}_X)_*
\end{equation}
such that
\begin{equation}\label{e26}
\textbf{q}\circ\eta\,=\, {\rm Id}_{(\phi^* (f_*{\mathcal O}_X)_*)/\phi^*S_*},
\end{equation}
where $\textbf{q}$ is the projection in \eqref{e25}. The action of $\Gamma\,=\,{\rm Gal}(\phi)$ 
(see \eqref{e18}) on $\phi^* (f_*{\mathcal O}_X)_*$ preserves the subsheaf $\phi^*S_*$ in
\eqref{e24}. Hence the action of $\Gamma$ on $\phi^* (f_*{\mathcal O}_X)_*$ produces an action
of $\Gamma$ on the above quotient $(\phi^* (f_*{\mathcal O}_X)_*)/\phi^*S_*$.
Now using the actions of the Galois group $\Gamma$ on
$\phi^* (f_*{\mathcal O}_X)_*$ and $(\phi^* (f_*{\mathcal O}_X)_*)/\phi^*S_*$, construct the following
homomorphism from $\eta$ in \eqref{eta}:
$$
\widehat{\eta}\,\,:=\,\, \frac{1}{\# \Gamma} \sum_{\gamma\in \Gamma} (\gamma^{-1})'\circ \eta\circ\gamma'' 
\,\, :\,\ \phi^* (f_*{\mathcal O}_X)_*/\phi^*S_*\,\, \longrightarrow\,\,
\phi^* (f_*{\mathcal O}_X)_*,
$$
where $(\gamma^{-1})'$ (respectively, $\gamma''$) denotes the automorphism of $\phi^* (f_*{\mathcal O}_X)_*$
(respectively, $\phi^* (f_*{\mathcal O}_X)_*/\phi^*S_*$) given by the action
of $\gamma^{-1}$ (respectively, $\gamma$) on $\phi^* (f_*{\mathcal O}_X)_*$
(respectively, $\phi^* (f_*{\mathcal O}_X)_*/\phi^*S_*$); recall that the order of $\Gamma$ is coprime to
${\rm Char}(k)$ if ${\rm Char}(k)$ is nonzero, and hence we can divide by the order $\# \Gamma$ of
$\Gamma$. From \eqref{e26} it follows immediately that
$$
\textbf{q}\circ\widehat{\eta}\,\,=\,\, {\rm Id}_{(\phi^* (f_*{\mathcal O}_X)_*)/\phi^*S_*}.
$$
Note that ${\mathcal I}\,:=\, {\rm image}(\widehat{\eta})\, \subset\, \phi^* (f_*{\mathcal O}_X)_*$
is preserved by the action of $\Gamma$ on $\phi^* (f_*{\mathcal O}_X)_*$. Since $\phi$ is
tamely ramified, this implies that there is a unique parabolic subbundle
$$
F_*\,\, \subset\, (f_*{\mathcal O}_X)_*
$$
such that $\phi^*F_*\,\,=\,\, {\mathcal I}$. Moreover, we have
\begin{equation}\label{e27}
(f_*{\mathcal O}_X)_*\,\,=\,\, F_*\oplus S_*,
\end{equation}
because $\phi^*(f_*{\mathcal O}_X)_*\,\,=\,\, \phi^* F_*\oplus \phi^*S_*$.

If $F_*$ is nonzero, then \eqref{e27} contradicts the fact that $S_*$ is the unique maximal parabolic
polystable subsheaf (in other words, the socle) of $(f_*{\mathcal O}_X)_*$ of parabolic degree zero. Indeed, if
$W_*\, \subset\, F_*$ is a parabolic polystable subbundle of parabolic degree zero, then $W_*\oplus S_*$ is
also a parabolic polystable subbundle of $(f_*{\mathcal O}_X)_*$ of parabolic degree zero.
Therefore, we have $F_*\,=\, 0$, which implies that $(f_*{\mathcal O}_X)_*\,=\, S_*$, or
in other words, $(f_*{\mathcal O}_X)_*$ is polystable.
\end{proof}

\begin{theorem}\label{thm1}
The parabolic vector bundle $E_*$ in \eqref{e14} is parabolic polystable of parabolic degree zero.
\end{theorem}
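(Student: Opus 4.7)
The plan is to deduce Theorem \ref{thm1} almost immediately from Proposition \ref{prop1}, Lemma \ref{lem1} and the decomposition \eqref{e14}. I would split the argument into two short steps: establishing that $\text{par-deg}(E_*)=0$, and transferring polystability from $(f_*\mathcal{O}_X)_*$ to the direct summand $E_*$.

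For the first step, recall from Section \ref{se2.3} that the induced parabolic structure on the summand $\mathcal{O}_Y\subset (f_*\mathcal{O}_X)_*$ is trivial, so $\text{par-deg}(\mathcal{O}_Y)=\text{degree}(\mathcal{O}_Y)=0$. Since parabolic degree is additive on direct sums, the splitting \eqref{e14} together with Lemma \ref{lem1} gives $\text{par-deg}(E_*)=\text{par-deg}((f_*\mathcal{O}_X)_*)-\text{par-deg}(\mathcal{O}_Y)=0$.

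For the second step, Proposition \ref{prop1} says that $(f_*\mathcal{O}_X)_*$ is parabolic polystable of parabolic slope zero. I would then invoke the general fact that a direct summand of a parabolic polystable bundle, which necessarily has the same parabolic slope, is itself polystable. Applied to \eqref{e14}, this yields that $E_*$ is parabolic polystable, and by the first step its parabolic degree is zero.

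The only point needing justification is the last cited fact, which I view as the main (but routine) obstacle. To prove it, write $(f_*\mathcal{O}_X)_*\cong\bigoplus_j W_j^{\oplus n_j}$ where the $W_j$ are mutually non-isomorphic parabolic stable bundles of parabolic slope zero (this is the polystable decomposition guaranteed by Proposition \ref{prop1}). Observe that $\mathcal{O}_Y$, being a line bundle with trivial parabolic structure, is itself parabolic stable of slope zero, so it must be isomorphic to some $W_{j_0}$. Applying Schur's lemma for parabolic stable bundles of equal slope (homomorphisms are zero between non-isomorphic ones, and scalars between isomorphic ones) to the inclusion $\mathcal{O}_Y\hookrightarrow(f_*\mathcal{O}_X)_*$ and to the complementary projection onto $E_*$, one deduces that $E_*\cong W_{j_0}^{\oplus (n_{j_0}-1)}\oplus\bigoplus_{j\neq j_0}W_j^{\oplus n_j}$, which is plainly parabolic polystable of parabolic slope zero. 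No new geometric input beyond Proposition \ref{prop1} is required; the work is entirely the standard isotypic bookkeeping in the semisimple category of parabolic semistable bundles of fixed slope.
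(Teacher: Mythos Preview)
Your proposal is correct and follows essentially the same route as the paper: invoke Proposition~\ref{prop1} and Lemma~\ref{lem1} to get that $(f_*\mathcal{O}_X)_*$ is parabolic polystable of parabolic degree zero, note that the summand $\mathcal{O}_Y$ has trivial parabolic structure and degree zero, and conclude for $E_*$ via \eqref{e14}. The only difference is that the paper's proof leaves the ``direct summand of polystable is polystable'' step implicit, whereas you spell out the isotypic/Schur argument; this is extra detail, not a different approach.
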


\begin{proof}
We know from Proposition \ref{prop1} and Lemma \ref{lem1} that the parabolic vector bundle
$(f_*{\mathcal O}_X)_*$ is parabolic polystable of parabolic degree zero. It was observed in the last
paragraph of Section \ref{se2.3} that the parabolic subbundle ${\mathcal O}_Y\, \subset\, f_*{\mathcal O}_X$
in \eqref{e14} does not have any nonzero parabolic weight, and we have
$\text{degree}({\mathcal O}_Y)\,=\, 0$. From these it follows that the
parabolic subbundle $E_*\,\, \subset\,\,(f_*{\mathcal O}_X)_*$ is parabolic polystable of parabolic
degree zero.
\end{proof}

\section{Parabolic stability}

\begin{lemma}\label{lem2}
Take any $f$ as in \eqref{e1} such that there is an irreducible smooth projective curve $Z$ and
morphisms
$$
\varpi\,:\, X\, \longrightarrow\, Z \ \ \, \text{ and }\ \ \, \varphi\,:\, Z\, \longrightarrow\, Y
$$
for which $f\,=\, \varphi\circ\varpi$ and ${\rm degree}(\varphi)\, <\,{\rm degree}(f)$. Then the
parabolic vector bundle $E_*$ in \eqref{e14} is not stable.
\end{lemma}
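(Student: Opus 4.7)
The plan is to use the factorization $f = \varphi\circ\varpi$ to produce a proper nonzero parabolic subbundle of $E_*$ of parabolic slope $0 = \mu(E_*)$ (equality by Theorem \ref{thm1}), which contradicts parabolic stability. Throughout, I implicitly assume $\deg(\varphi)\geq 2$, since otherwise $\varphi$ is an isomorphism and the factorization is trivial.

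Pushing forward the canonical inclusion $\mathcal{O}_Z \hookrightarrow \varpi_*\mathcal{O}_X$ under $\varphi$ gives a natural inclusion $\varphi_*\mathcal{O}_Z \hookrightarrow \varphi_*\varpi_*\mathcal{O}_X = f_*\mathcal{O}_X$, and the trace morphism of $\varpi$ realizes this as a direct summand of coherent $\mathcal{O}_Y$-modules. Applying the construction of Section \ref{se2.3} to $\varphi$ in place of $f$ produces a parabolic vector bundle $(\varphi_*\mathcal{O}_Z)_* = \mathcal{O}_Y \oplus (E^\varphi)_*$, where $E^\varphi := \ker(\text{trace of }\varphi)$. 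By Lemma \ref{lem1} applied to $\varphi$, $\text{par-deg}((\varphi_*\mathcal{O}_Z)_*) = 0$, so $\text{par-deg}((E^\varphi)_*) = 0$ as well.

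The principal step, which I expect to be the main obstacle, is to verify that the inclusion $\varphi_*\mathcal{O}_Z \hookrightarrow f_*\mathcal{O}_X$ is compatible with the parabolic structures constructed in Section \ref{se2.3}, so that $(E^\varphi)_*$ sits inside $E_*$ as a parabolic subbundle. This is a local computation at each $y \in Y$: for $w \in \varphi^{-1}(y)$ of multiplicity $m^\varphi_w$ and $x \in \varpi^{-1}(w)$ of multiplicity $m^\varpi_x$, one has $m^f_x = m^\varphi_w\cdot m^\varpi_x$, and a local function on $Z$ vanishing at $w$ to order $i$ pulls back under $\varpi$ to a local function on $X$ vanishing at $x$ to order at least $i\cdot m^\varpi_x$. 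Comparing with the filtrations $\mathbf{F}(x,\cdot)$ of \eqref{e13}, this shows that the subspace of $(\varphi_*\mathcal{O}_Z)_y$ of intrinsic parabolic weight $\geq i/m^\varphi_w$ is sent into the subspace of $(f_*\mathcal{O}_X)_y$ of parabolic weight $\geq i/m^\varphi_w$. Hence parabolic weights are preserved, and $(E^\varphi)_*$ is a parabolic subbundle of $E_*$.

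Finally, $(E^\varphi)_*$ has parabolic slope $0 = \mu(E_*)$. Its rank equals $\deg(\varphi) - 1 \geq 1$, and it is strictly less than $\text{rank}(E_*) = r - 1$ by the hypothesis $\deg(\varphi) < r$. Thus $(E^\varphi)_*$ is a proper nonzero parabolic subbundle of $E_*$ with the same parabolic slope, which precludes parabolic stability of $E_*$.
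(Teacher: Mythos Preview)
Your argument is correct and follows essentially the same route as the paper: both exhibit the parabolic subbundle coming from $(\varphi_*\mathcal{O}_Z)_*$ (equivalently its image $(E^\varphi)_*$ under the projection to $E_*$) as a proper nonzero parabolic subbundle of parabolic degree zero. You supply the local verification of parabolic compatibility and the rank bounds that the paper leaves implicit, and your side remark that one must assume $\deg(\varphi)\geq 2$ is well taken.
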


\begin{proof}
The given condition that $f\,=\, \varphi\circ\varpi$ implies that the parabolic vector bundle $(\varphi_*
{\mathcal O}_Z)_*$ is a parabolic subbundle of $(f_*{\mathcal O}_X)_*$. Lemma \ref{lem1} and
Proposition \ref{prop1} together say that both
$(\varphi_* {\mathcal O}_Z)_*$ and $(f_*{\mathcal O}_X)_*$ are parabolic polystable of
parabolic degree zero. Consequently, the image of the composition of maps 
$$(\varphi_* {\mathcal O}_Z)_*\, \hookrightarrow\, (f_*{\mathcal O}_X)_*\,\longrightarrow\, E_*,$$
where $(f_*{\mathcal O}_X)_*\,\longrightarrow\, E_*$ is the projection for the decomposition
in \eqref{e14}, violates the stability condition for $E_*$.
\end{proof}

Take any $f$ as in \eqref{e1}. As in \eqref{e18}
$$
\phi\,\, :\,\, M\,\, \longrightarrow\,\, Y
$$
is the Galois closure of $f$. As before, let $\Gamma$ denote the Galois group ${\rm Gal}(\phi)$ and
$\delta\, :\, M\, \longrightarrow\, X$ be as in \eqref{e19}. As noted before, $\delta$ is a Galois covering.
Let $H$ denote the Galois group ${\rm Gal}(\delta)$. Then
\begin{equation}\label{eh}
H \,\, \subset\,\, \Gamma.
\end{equation}
Let
\begin{equation}\label{eh2}
{\mathbb V}\,\, \subset\,\, k[\Gamma/H]
\end{equation}
be the subspace consisting of all $\sum_{\gamma\in\Gamma/H} c_\gamma\cdot \gamma\, \in\, k[\Gamma/H]$,
where $c_\gamma\, \in\, k$, such that
$$
\sum_{\gamma\in \Gamma/H} c_\gamma\,\,=\,\, 0.
$$
The left-translation action of $\Gamma$ on $\Gamma/H$ produces an action of $\Gamma$ on $\mathbb V$.

\begin{proposition}\label{prop2}
If the above action of $\Gamma$ on $\mathbb V$ is irreducible, then the parabolic vector bundle
$E_*$ in \eqref{e14} is parabolic stable.
\end{proposition}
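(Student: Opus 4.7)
The plan is to translate parabolic stability of $E_*$ into irreducibility of $\mathbb{V}$, using the parabolic--equivariant correspondence for the Galois cover $\phi\,:\,M\,\longrightarrow\, Y$. I would first identify the $\Gamma$-equivariant vector bundle on $M$ corresponding to $E_*$. From \eqref{e17}, $\phi^*(\phi_*\mathcal{O}_M)_*\,=\,\mathcal{O}_M\otimes_k k[\Gamma]$ as $\Gamma$-equivariant bundles, where $\Gamma$ acts by its Galois action on $M$ and by left translation on $k[\Gamma]$. Since $X\,=\,M/H$, the inclusion $f_*\mathcal{O}_X\,\hookrightarrow\,\phi_*\mathcal{O}_M$ identifies $f_*\mathcal{O}_X$ with the $H$-invariant subsheaf $(\phi_*\mathcal{O}_M)^H$ (via $\phi_*\mathcal{O}_M\,=\,f_*\delta_*\mathcal{O}_M$ and $(\delta_*\mathcal{O}_M)^H\,=\,\mathcal{O}_X$); since $|H|$ is invertible in $k$, taking $H$-invariants commutes with parabolic pullback, yielding $\phi^*(f_*\mathcal{O}_X)_*\,=\,\mathcal{O}_M\otimes_k k[\Gamma/H]$ as $\Gamma$-equivariant bundles. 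The trivial summand $\mathcal{O}_Y$ in \eqref{e14} corresponds to the one-dimensional trivial subrepresentation of $k[\Gamma/H]$, so
$$
\phi^*E_*\,\cong\,\mathcal{O}_M\otimes_k \mathbb{V}
$$
as $\Gamma$-equivariant bundles, with $\mathbb{V}$ the $\Gamma$-representation defined in \eqref{eh2}.

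Next I would compute parabolic endomorphisms. Since $E_*$ is parabolic polystable by Theorem \ref{thm1}, it is parabolic stable if and only if $\text{End}_{\rm par}(E_*)\,=\,k$ (a polystable object is simple precisely when its endomorphism algebra reduces to the base field). The parabolic--equivariant correspondence for the tamely ramified Galois cover $\phi$ is fully faithful, so $\text{End}_{\rm par}(E_*)$ coincides with the space of $\Gamma$-equivariant endomorphisms of $\mathcal{O}_M\otimes_k\mathbb{V}$. Any endomorphism of $\mathcal{O}_M\otimes_k\mathbb{V}$ is a global section of $\mathcal{O}_M\otimes_k\text{End}_k(\mathbb{V})$, and since $H^0(M,\mathcal{O}_M)\,=\,k$, it is a single element of $\text{End}_k(\mathbb{V})$ acting constantly on stalks. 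The $\Gamma$-equivariance condition then forces this endomorphism to commute with the $\Gamma$-action on $\mathbb{V}$, giving
$$
\text{End}_{\rm par}(E_*)\,=\,\text{End}_\Gamma(\mathbb{V}).
$$
Under the hypothesis that $\mathbb{V}$ is irreducible, Schur's lemma (valid because $k$ is algebraically closed) gives $\text{End}_\Gamma(\mathbb{V})\,=\,k$, and parabolic stability of $E_*$ follows.

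The main obstacle is the rigorous justification of the identification $\phi^*E_*\,\cong\,\mathcal{O}_M\otimes_k\mathbb{V}$ as $\Gamma$-equivariant bundles, particularly at the ramification points of $\phi$, where one must verify that the parabolic pullback produces no residual nonzero parabolic weights. This is ultimately inherited from the analogous clearance for $\phi^*(\phi_*\mathcal{O}_M)_*$ in \eqref{e17}: the parabolic structure on $(f_*\mathcal{O}_X)_*$ is the one induced from $(\phi_*\mathcal{O}_M)_*$ via the decomposition \eqref{e21}, so the clearance descends to the direct summand $\phi^*(f_*\mathcal{O}_X)_*$ and hence to $\phi^*E_*$. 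A secondary point to verify is that $\text{Hom}$-spaces of parabolic bundles on $Y$ correspond to $\Gamma$-equivariant $\text{Hom}$-spaces of the associated bundles on $M$; this is standard for tamely ramified Galois covers in the setting of \cite{AB}.
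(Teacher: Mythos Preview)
Your proof is correct and shares the crucial identification $\phi^*E_*\,\cong\,\mathcal{O}_M\otimes_k\mathbb{V}$ (as a $\Gamma$-equivariant bundle) with the paper's argument. The divergence is in the final step. The paper argues by contradiction: a destabilizing parabolic subbundle $F_*\subset E_*$ of parabolic degree zero pulls back to a degree-zero subbundle of $\mathcal{O}_M\otimes_k k[\Gamma]$, hence is of the form $\mathcal{O}_M\otimes_k\mathbb{W}_0$ for a subspace $\mathbb{W}_0\subsetneq\mathbb{V}$; since $\phi^*F_*$ is pulled back from $Y$, the subspace $\mathbb{W}_0$ is $\Gamma$-invariant, contradicting irreducibility. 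You instead use the characterization ``polystable with $\mathrm{End}=k$ implies stable'' together with the full faithfulness of the parabolic--equivariant correspondence to compute $\mathrm{End}_{\rm par}(E_*)=\mathrm{End}_\Gamma(\mathbb{V})$, then invoke Schur. Your route is clean but leans on the categorical equivalence of \cite{AB}; the paper's route is slightly more hands-on, needing only that a degree-zero subbundle of a trivial bundle on a projective curve is itself of the form $\mathcal{O}_M\otimes_k W$. Both arguments rely on Theorem~\ref{thm1} (polystability) in an essential way---the paper to guarantee a destabilizing subbundle has parabolic degree exactly zero, and you to reduce stability to simplicity.
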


\begin{proof}
In the proof of Proposition \ref{prop1} it was shown that $\phi^*(f_*{\mathcal O}_X)_*$ is a
subbundle of $\phi^*(\phi_*{\mathcal O}_M)_*\,=\, {\mathcal O}_M\otimes_k k[\Gamma]$ of degree
zero (see \eqref{e22} and \eqref{e17}); recall that $\phi^*(\phi_*{\mathcal O}_M)_*$ does not
have any nonzero parabolic weight. This immediately implies that there is a unique subspace
$${\mathbb W}\, \subset\, k[\Gamma]$$ such that the subbundle $\phi^*(f_*{\mathcal O}_X)_*\, \subset\,
{\mathcal O}_M\otimes_k k[\Gamma]$ coincides with ${\mathcal O}_M\otimes_k {\mathbb W}\, \subset\,
{\mathcal O}_M\otimes_k k[\Gamma]$. Now it is straight-forward to see that
${\mathbb W}\,=\, k[\Gamma/H]$, where $H$ is the subgroup in \eqref{eh}. Consequently, 
the parabolic vector bundle $E_*$ in \eqref{e14} has the following property:
\begin{equation}\label{e28}
\phi^*E_*\,=\, {\mathcal O}_M\otimes_k {\mathbb V}\, \subset\,
{\mathcal O}_M\otimes_k k[\Gamma]\,=\, \phi^*(\phi_*{\mathcal O}_M)_*,
\end{equation}
where $\mathbb V$ is the subspace in \eqref{eh2}.

Assume that the parabolic vector bundle $E_*$ is not parabolic stable. Let
$$0\,\, \neq\,\, F_*\,\, \subsetneq\,\, E_*$$ be a parabolic subbundle that violates the stability 
condition for $E_*$. Since $E_*$ is parabolic polystable (see Theorem \ref{thm1}), this implies that
$$
\text{par-deg}(F_*)\,\,=\,\, 0.
$$
Hence from \eqref{e17a} it follows that $\text{par-deg}(\phi^* F_*)\, =\, 0$. Since $\phi^*F_*$ is
a parabolic subbundle, of parabolic degree zero, of $\phi^*(\phi_*{\mathcal O}_M)_*\,=\,
{\mathcal O}_M\otimes_k k[\Gamma]$, there is a unique subspace
${\mathbb W}_0\, \subset\, k[\Gamma]$ such that the subbundle $\phi^*F_*\, \subset\,
{\mathcal O}_M\otimes_k k[\Gamma]$ coincides with ${\mathcal O}_M\otimes_k {\mathbb W}_0\, \subset\,
{\mathcal O}_M\otimes_k k[\Gamma]$.

We have
$$
0\, \, \not=\,\, {\mathbb W}_0\,\,\subset\,\, {\mathbb V}
$$
because $F_*\, \subset\, E_*$ (see \eqref{e28}). Also
${\mathbb W}_0$ is preserved by the action of $\Gamma$ on $k[\Gamma]$ because $\phi^*F_*$ is pulled
back from $Y$. Hence the $\Gamma$--module $\mathbb V$ is not irreducible. In view of this contradiction
we conclude that $E_*$ is parabolic stable.
\end{proof}

\begin{theorem}\label{thm2}
If the Galois group $\Gamma$ is the symmetric group
$S_r$ or the alternating group $A_r$, then $E_*$ in \eqref{e14} is parabolic stable.
\end{theorem}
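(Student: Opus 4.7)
The strategy is to invoke Proposition~\ref{prop2}: it suffices to show that the $\Gamma$--module $\mathbb V$ defined in \eqref{eh2} is irreducible whenever $\Gamma\,=\,S_r$ or $A_r$.

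First I would identify $(\Gamma,H)$ with a concrete permutation action. Since $\delta\colon M\,\longrightarrow\, X$ is Galois with group $H$, the coset space $\Gamma/H$ is in $\Gamma$--equivariant bijection with the generic fiber of $f$, and this is precisely the embedding $\Gamma\,\hookrightarrow\, S_r$ recorded in the paragraph after \eqref{e19}. Consequently, when $\Gamma\,=\,S_r$ the subgroup $H$ is a one-point stabiliser, isomorphic to $S_{r-1}$, and when $\Gamma\,=\,A_r$ one has $H\,\cong\, A_{r-1}$. In both cases $k[\Gamma/H]$ is the natural $r$--dimensional permutation representation of $\Gamma$ on an $r$--element set, and $\mathbb V$ is its augmentation submodule of dimension $r-1$, i.e., the classical standard representation of $\Gamma$.

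The remaining step is to verify irreducibility of this standard representation over $k$. For $\Gamma\,=\,S_r$, it is the Specht module $S^{(r-1,1)}$, which is well-known to be irreducible in any characteristic $p$ with $p\,\nmid\, r$; the running hypothesis ${\rm Char}(k)\,=\,0$ or ${\rm Char}(k)\,>\, r$ supplies this. For $\Gamma\,=\,A_r$ with $r\,\geq\, 4$, the partition $(r-1,1)$ is not self-conjugate, so the Specht module stays irreducible upon restriction from $S_r$ to $A_r$, again in our characteristic range. Applying Proposition~\ref{prop2} then concludes the argument.

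The principal obstacle I anticipate is the edge case $\Gamma\,=\,A_3$ with $r\,=\,3$: here $A_3\,\cong\,\mathbb Z/3$ is abelian and $k$ is algebraically closed, so $\mathbb V$ splits as a sum of two non-isomorphic characters; Proposition~\ref{prop2} is not applicable and in fact $E_*$ would decompose as a direct sum of two parabolic line bundles, yielding polystability but not stability. This case must be excluded (implicitly or explicitly) from the statement. Apart from this exception, only elementary modular representation theory of $S_r$ and $A_r$ is required, which in the standard representation case can be verified directly: any nonzero $\Gamma$--invariant subspace $W\,\subset\,\mathbb V$, after averaging over suitable transpositions (for $S_r$) or $3$--cycles (for $A_r$), must contain a vector of the form $e_i-e_j$, and these vectors span $\mathbb V$.
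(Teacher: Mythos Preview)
Your approach is exactly the paper's: identify $\Gamma/H$ with the fibre so that $k[\Gamma/H]$ is the natural permutation module, observe that $\mathbb V$ is the standard representation, assert its irreducibility for $S_r$ and for its restriction to $A_r$, and invoke Proposition~\ref{prop2}. The paper's argument is the two-line version of what you wrote, without the Specht--module or self-conjugacy justifications.

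Your flagged exception is real and is glossed over in the paper. For $r=3$ and $\Gamma=A_3\cong\mathbb{Z}/3\mathbb{Z}$ the map $f$ is itself cyclic Galois of degree $3$; over the algebraically closed field $k$ (with ${\rm Char}(k)\neq 3$) the two-dimensional module $\mathbb V$ splits as the sum of the two nontrivial characters, and correspondingly $E_*$ decomposes as a direct sum of two parabolic line bundles of parabolic degree zero. So $E_*$ is genuinely only polystable in that case, and the $A_r$ clause of the theorem needs $r\geq 4$. Your argument for $r\geq 4$ is fine: in the semisimple range ${\rm Char}(k)=0$ or ${\rm Char}(k)>r$ the ordinary branching rule applies, and since $(r-1,1)$ is not self-conjugate the standard module stays irreducible on restriction to $A_r$.
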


\begin{proof}
If $\Gamma$ is $S_r$ then $H\,=\,\{\sigma \,\in\, S_r\,\,\big\vert\,\, \sigma(r)\,=\,r\}
\,\cong\, S_{r-1}$. Moreover, the representation $\mathbb V$ 
is the standard representation of $S_r$ which is irreducible. Even if $\Gamma$ is $A_r$, the action of 
$\Gamma$ on $k(\Gamma/H)$ is same as the restriction of $S_r$-action on $k(S_r/S_{r-1})$ to the subgroup $A_r$. 
Hence in this case as well $\mathbb V$ is the restriction of the standard representation to $A_r$, which again is 
irreducible.
 
 Now the result follows from Proposition \ref{prop2}.
\end{proof}

\begin{corollary}\label{cor1}
Let $f$ of \eqref{e1} be genuinely ramified and Morse, then $E_*$ in \eqref{e14} is parabolic stable.
\end{corollary}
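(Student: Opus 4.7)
The plan is to reduce Corollary \ref{cor1} to Theorem \ref{thm2}, which will amount to showing that whenever $f$ is genuinely ramified and Morse the Galois group $\Gamma$ of the Galois closure $\phi\,:\,M\,\longrightarrow\,Y$ coincides with the full symmetric group $S_r$. Since $\Gamma$ acts faithfully on the fiber $\Gamma/H$ (the core of $H$ in $\Gamma$ is trivial by definition of the Galois closure), we may view $\Gamma$ as a transitive subgroup of $S_r$.

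Next I would translate the Morse condition into a statement about inertia. For each branch point $y\,\in\,\Delta$, the Morse hypothesis says that exactly one point of $f^{-1}(y)$ is ramified and that its ramification index is $2$. At a point $\widetilde{y}\,\in\,M$ over $y$, the inertia subgroup $I_{\widetilde{y}}\,\subset\,\Gamma$ is cyclic (the cover is tamely ramified), and its orbit decomposition on $\Gamma/H$ records the ramification multiplicities of points of $f^{-1}(y)$. The Morse condition therefore forces $I_{\widetilde{y}}$ to act on $\Gamma/H$ with one orbit of size $2$ and $r-2$ fixed points. By faithfulness of $\Gamma\,\hookrightarrow\,S_r$, the subgroup $I_{\widetilde{y}}$ has order $2$ and is generated by a transposition in $S_r$.

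Let $N\,\triangleleft\,\Gamma$ be the normal subgroup generated by all the inertia subgroups $I_{\widetilde{y}}$ as $\widetilde{y}$ ranges over the ramification locus of $\phi$. By the discussion above, $N$ is generated (as a subgroup of $S_r$) by transpositions. The maximal étale intermediate subcover of $f\,:\,X\,\longrightarrow\,Y$ corresponds to the subgroup $HN\,\subset\,\Gamma$, so the genuine ramification hypothesis is equivalent to $HN\,=\,\Gamma$, which in turn is equivalent to the action of $N$ on $\Gamma/H$ being transitive. Thus $N$ is a transitive subgroup of $S_r$ generated by transpositions. The standard spanning-tree argument (the graph on $\{1,\ldots,r\}$ whose edges are the transpositions in $N$ is preserved by $N$, hence must be connected by transitivity, and a set of transpositions forming a connected graph on $\{1,\ldots,r\}$ generates $S_r$) gives $N\,=\,S_r$, and therefore $\Gamma\,=\,S_r$. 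The corollary then follows immediately from Theorem \ref{thm2}.

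The main technical point is the group-theoretic translation of "genuinely ramified" into the equality $HN\,=\,\Gamma$; this is a standard consequence of the Galois-theoretic description of intermediate étale covers, but if one prefers to avoid reproving it, the equivalence "Morse plus genuinely ramified $\iff$ $\Gamma\,=\,S_r$" is established in \cite{BKP} and may be invoked directly.
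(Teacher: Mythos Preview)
Your proposal is correct and follows exactly the paper's route: reduce to Theorem~\ref{thm2} by showing that a genuinely ramified Morse map has monodromy $\Gamma=S_r$, then apply that theorem. The only difference is cosmetic: the paper simply cites \cite{BKP} for the implication ``genuinely ramified $+$ Morse $\Rightarrow$ $\Gamma=S_r$'', whereas you also sketch its proof (inertia groups are transpositions by the Morse hypothesis, the normal subgroup $N$ they generate acts transitively on $\Gamma/H$ by the genuine-ramification hypothesis $HN=\Gamma$, and a transitive subgroup of $S_r$ generated by transpositions is all of $S_r$).
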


\begin{proof}
The main theorem of \cite{BKP} implies that the monodromy $\Gamma$ of $f$ is $S_r$. Hence the result
follows from Theorem \ref{thm2}.
\end{proof}

\section*{Acknowledgements}

We thank the referee for helpful comments. The first-named author is partially supported by a J. C. Bose Fellowship (JBR/2023/000003).


\end{document}